\numberwithin{equation}{section}
\def\cocoa{{\hbox{\rm C\kern-.13em o\kern-.07em C\kern-.13em o\kern-.15em A}}}
\newtheorem{theorem}{Theorem}[section]
\newtheorem{proposition}[theorem]{Proposition}
\newtheorem{corollary}[theorem]{Corollary}
\theoremstyle{definition}
\newtheorem{remark}[theorem]{Remark}
\newtheorem{definition}[theorem]{Definition}
\newtheorem{example}[theorem]{Example}
\newcommand {\Aut}{\mathrm{Aut}}
\newcommand {\PGL}{\mathrm{PGL}}
\newcommand {\sHom}{\mathcal{H}\kern -0.25ex{\mathit om}}
\newcommand {\sExt}{\mathcal{E}\kern -0.25ex{\mathit xt}}
\newcommand {\sTor}{\mathcal{T}\kern -0.25ex{\mathit or}}
\newcommand {\rk}{\mathrm{rk}}
\newcommand {\Hom}{\mathrm{Hom}}
\newcommand {\Hilb}{\mathcal{H}\kern -0.25ex{\mathit ilb\/}}
\newcommand {\quantum}{k}
\newcommand {\field}{\mathbf k}
\newcommand{\cC}{{\mathcal C}}
\newcommand{\cE}{{\mathcal E}}
\newcommand{\cN}{{\mathcal N}}
\newcommand{\cO}{{\mathcal O}}
\newcommand{\cG}{{\mathcal G}}
\newcommand{\cT}{{\mathcal T}}
\newcommand{\cI}{{\mathcal I}}
\newcommand {\bZ}{\mathbb{Z}}
\newcommand {\bC}{\mathbb{C}}
\newcommand {\bP}{\mathbb{P}}
\newcommand{\Pic}{\operatorname{Pic}}
\newcommand{\NS}{\operatorname{NS}}
\def\p#1{{\bP^{#1}}}
\title[Tangent, cotangent, normal and conormal bundles]{Tangent, cotangent, normal and conormal\\ bundles are almost never instanton bundles}
\thanks{The  author is a member of GNSAGA group of INdAM}
\subjclass[2020]{Primary: 14J60. Secondary: 14D21, 14F06}
\keywords{Ulrich bundle, Instanton bundle}
\author[G. Casnati]{Gianfranco Casnati}
\begin{document}

\maketitle

\begin{abstract}
In this very short note we give an elementary characteristic free proof of the result claimed in the title (see Theorem \ref{tMain} for a more precise formulation), generalizing a recent result proved in \cite{B--M--PM--T} for Ulrich bundles over the complex field. Moreover, we also give a similar result about the twists of the cotangent bundle and make some comments about the possibility to obtain an analogous result for twists of the tangent bundle.
\end{abstract}

\section{Introduction and Notation}
In this paper a {\sl projective variety $X$} is a closed, integral subscheme of some projective space over an algebraically closed field $\field$ of characteristic $p$. 

In \cite{An--Ca1} the following definition has been introduced.

\begin{definition}
\label{dMalaspinion}
Let $X$ be a projective variety of dimension $n\ge1$ endowed with an ample and globally generated line bundle $\cO_X(h)$.

A non--zero coherent sheaf  $\cE$ on $X$ is called (ordinary) $h$--instanton sheaf with quantum number $\quantum\in\bZ$ if the following properties hold:
\begin{itemize}
\item $h^0\big(\cE(-h)\big)=h^n\big(\cE(-nh)\big)=0$;
\item $h^i\big(\cE(-(i+1)h)\big)=h^{n-i}\big(\cE(-(n-i)h)\big)=0$ if $1\le i\le n-2$;
\item $h^1\big(\cE(-h)\big)=h^{n-1}\big(\cE(-nh)\big)=\quantum$.
\end{itemize}
If $\quantum=0$, then $\cE$ is called $h$--Ulrich sheaf.
\end{definition}

The existence of an instanton sheaf with fixed quantum number $\quantum$ on $X$ is not obvious. E.g. the case $\quantum=0$, i.e. the case of Ulrich sheaves, has been object of deep study in the last two decades and the problem of their existence is still wide open: see \cite{Bea6} and the references therein for more details about this case. 

The interest in dealing with instanton and Ulrich sheaves is also motivated by the fact that their existence on a fixed variety $X$ is often related to interesting geometric properties. E.g. in \cite{An--Ca2} it is shown that when $X\subseteq\p {n+1}$ is a hypersurface and $\cO_X(h)\cong\cO_X\otimes\cO_{\p{n+1}}(1)$, the existence of a locally Cohen--Macaulay $h$--instanton sheaf is equivalent to the existence of a representation of a power of the form defining $X$ as the determinant of a suitable morphism of vector bundles of the same rank on $\p {n+1}$ with a prescribed cohomology table, called Steiner bundles.

Thus, it is perhaps reasonable to ask whether one of the  bundles which are naturally associated to a smooth variety $X$ is an instanton bundle or not and, in the affirmative case, which one is also Ulrich. 

E.g. we can deal with the {\sl cotangent bundle}, i.e. the sheaf of differentials $\Omega_X$, and the {\sl  tangent bundle}, i.e. its dual $\cT_X$: $\Omega_X$ and $\cT_X$ have rank $n:=\dim(X)$. Moreover, if $X\subseteq\p N$ and $\cI_{X}\subseteq\cO_{\p N}$ is its sheaf of ideals, we can also consider two further sheaves, namely the {\sl  conormal bundle}, i.e. $\cC_X:=\cI_{X}/\cI_{X}^2$, and the {\sl normal bundle}, i.e. its dual $\cN_X$: $\cC_X$ and $\cN_X$ have rank $N-n$.

As a preliminary example we consider the bundle $\cN_X$. There are exact sequences
\begin{gather}
\label{seqNormal}
0\longrightarrow\cT_X\longrightarrow\cO_X\otimes\cT_{\p N}\longrightarrow\cN_X\longrightarrow0,\\
\label{seqEuler}
0\longrightarrow\cO_{\p N}\longrightarrow\cO_{\p N}(1)^{\oplus N+1}\longrightarrow\cT_{\p N}\longrightarrow0.
\end{gather}
The restriction to $X$ of  \eqref{seqEuler} combined with \eqref{seqNormal} yields that $\cN_X(-h)$ is certainly globally generated. In particular $h^0(\cN_X(-h))\ne0$: we deduce that $\cN_X$ is never an  $h$--instanton bundle, hence it is never $h$--Ulrich as well. 

In this short note we prove the following result with a very easy and direct characteristic free proof.

\begin{theorem}
\label{tMain}
Let $X\subseteq\p N$ be a smooth projective variety of dimension $n\ge1$ and $\cO_X(h):=\cO_X\otimes\cO_{\p N}(1)$. 

Then the following assertions hold.
\begin{enumerate}
\item $\cT_X$ is an $h$--instanton bundle if and only if either $X\cong\p1$ and $\cO_X(h)\cong\cO_{\p1}(3)$ or $X\cong\p2$ and $\cO_X(h)\cong\cO_{\p2}(2)$: in these cases $\cT_X$ is  $h$--Ulrich.
\item $\Omega_{X}$, $\cN_X$ and $\cC_X$ are never $h$--instanton bundles.
\end{enumerate}
\end{theorem}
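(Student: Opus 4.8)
The plan is to push all four bundles through the two pairs of exact sequences already on the table — the restriction to $X$ of the Euler sequence \eqref{seqEuler}, namely $0\to\cO_X\to\cO_X(h)^{\oplus N+1}\to\cT_{\p N}|_X\to 0$, together with \eqref{seqNormal}, and their duals $0\to\Omega_{\p N}|_X\to\cO_X(-h)^{\oplus N+1}\to\cO_X\to 0$ and the conormal sequence $0\to\cC_X\to\Omega_{\p N}|_X\to\Omega_X\to 0$ — feeding them a short list of positivity facts. From the Euler sequence on $\p N$ the bundle $\cT_{\p N}(-1)$ is globally generated, hence so is every exterior power $\wedge^q\big(\cT_{\p N}(-1)\big)\cong\Omega^{N-q}_{\p N}(N-q+1)$; equivalently $\Omega^p_{\p N}(p+1)$ is globally generated for all $0\le p\le N$. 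Restricting to $X$, recalling from the excerpt that $\cN_X(-h)$ is globally generated, and using that quotients and tensor products of globally generated sheaves are globally generated, one has all the positivity one needs.

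The cases $\cN_X$, $\Omega_X$ and $\cC_X$ all fail the top cohomology vanishing. For $\cN_X$ this is the excerpt's remark. For $\Omega_X$: Serre duality on the smooth $X$ together with the canonical isomorphism $\cT_X\otimes\omega_X\cong\wedge^{n-1}\Omega_X$ gives $h^n\big(\Omega_X(-nh)\big)=h^0\big(\wedge^{n-1}\Omega_X(nh)\big)$, and applying $\wedge^{n-1}$ to the conormal surjection $\Omega_{\p N}|_X\twoheadrightarrow\Omega_X$ exhibits $\wedge^{n-1}\Omega_X(nh)$ as a quotient of $\Omega^{n-1}_{\p N}(n)|_X$, hence globally generated and non-zero, so $h^n\big(\Omega_X(-nh)\big)>0$ and the instanton requirement $h^n\big(\Omega_X(-nh)\big)=0$ fails. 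For $\cC_X$ (where we may assume $X\subsetneq\p N$, so that $\cN_X\ne 0$): Serre duality gives $h^n\big(\cC_X(-nh)\big)=h^0\big(\cN_X(-h)\otimes\omega_X((n+1)h)\big)$, and applying $\wedge^{n}$ to the same surjection (recall $\wedge^n\Omega_X=\omega_X$) exhibits $\omega_X((n+1)h)$ as a quotient of $\Omega^{n}_{\p N}(n+1)|_X$; thus $\cN_X(-h)\otimes\omega_X((n+1)h)$ is a quotient of the globally generated sheaf $\cN_X(-h)\otimes\Omega^{n}_{\p N}(n+1)|_X$, hence globally generated and non-zero, and again $h^n\big(\cC_X(-nh)\big)>0$. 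Therefore $\Omega_X$ and $\cC_X$ are never $h$--instanton bundles.

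For $\cT_X$ the ``if'' direction is a direct check: if $X=\p1$ and $\cO_X(h)=\cO_{\p1}(3)$ then $\cT_X(-h)=\cO_{\p1}(-1)$ is acyclic and, $n$ being $1$, the instanton conditions amount to $H^\bullet\big(\cT_X(-h)\big)=0$; if $X=\p2$ and $\cO_X(h)=\cO_{\p2}(2)$ a computation with the Euler sequence yields $h^i\big(\cT_{\p2}(-mh)\big)=0$ for all $i$ and $m\in\{1,2\}$, so $\cT_X$ is $h$--Ulrich in both cases. For the converse the top condition now reads $h^n\big(\cT_X(-nh)\big)=h^0\big(\Omega_X\otimes\omega_X(nh)\big)=0$, and this can genuinely hold: $\Omega_X\otimes\omega_X(nh)$ is only a quotient of $\big(\Omega_{\p N}\otimes\Omega^n_{\p N}\big)(n)|_X$, and to make the latter globally generated one would have to split the twist $n$ as $a+b$ with $a\ge 2$ and $b\ge n+1$, which is impossible. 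One is thus forced to combine this vanishing with the remaining conditions — notably the identity $\quantum=h^1\big(\cT_X(-h)\big)=h^{n-1}\big(\cT_X(-nh)\big)=h^1\big(\Omega_X\otimes\omega_X(nh)\big)$ and, for $n\ge 3$, the middle vanishings $h^1\big(\cT_X(-2h)\big)=h^2\big(\cT_X(-2h)\big)=0$ — and to play them against the Euler and normal sequences (and, if convenient, against general hyperplane sections of $X$) in order to force first $n\le 2$ and then $X\cong\p n$. Once $X\cong\p n$, Bott's formula closes the argument: $h^n\big(\cT_{\p n}(-nd)\big)=h^0\big(\Omega_{\p n}(nd-n-1)\big)=0$ forces $nd\le n+2$, while $h^0\big(\cT_{\p n}(-d)\big)=0$ forces $d\ge 3$ when $n=1$ and $d\ge 2$ when $n\ge 2$, leaving exactly $(n,d)\in\{(1,3),(2,2)\}$, both of which do occur.

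The step I expect to be the main obstacle is precisely this last one for $\cT_X$: extracting from the instanton conditions that $n\le 2$ and $X\cong\p n$. By contrast, the other three bundles are disposed of by a single application of Serre duality to the top cohomology, combined with the elementary global generation of $\Omega^p_{\p N}(p+1)$ and of $\cN_X(-h)$.
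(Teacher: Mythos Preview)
Your treatment of $\cN_X$, $\Omega_X$ and $\cC_X$ is correct and in fact more elementary than the paper's. The paper kills $\cC_X$ and $\Omega_X$ (for $n\ge 2$) via the slope identity \eqref{Slope} combined with the nefness of $\omega_X((n+1)h)$ (Corollary~\ref{cSm}), whereas you bypass both ingredients by a direct Serre-duality computation of $h^n(\cE(-nh))$ and the global generation of $\Omega^p_{\p N}(p+1)$. Your route has the advantage of using nothing beyond the Euler and conormal sequences.

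The genuine gap is the converse for $\cT_X$. You have not proved it: after correctly observing that the top-cohomology trick no longer applies, you only sketch a plan (``play the remaining vanishings against the Euler and normal sequences \ldots\ to force first $n\le 2$ and then $X\cong\p n$'') and explicitly flag this as ``the main obstacle''. There is no argument here, and the target of the plan is itself off: the paper does \emph{not} reduce to $X\cong\p n$. What actually happens is that the slope identity \eqref{Slope} applied to $\cT_X$ gives
\[
(n(n+1)h+(n+2)K_X)h^{n-1}=0,
\]
which, rewritten as $(n+2)((n-1)h+K_X)h^{n-1}+2h^n=0$, contradicts the nefness of $\omega_X((n-1)h)$ whenever that nefness holds. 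One is then left with the short list of Theorem~\ref{tKK}: $\p n$ with $\cO(1)$, the Veronese surface, smooth quadrics, and scrolls over curves; each is disposed of by a one-line computation (for scrolls via the relative tangent sequence). Without the slope identity \eqref{Slope} and the adjunction-theoretic input of Theorem~\ref{tKK}, it is not clear your exact-sequence chase can be made to terminate, and in any case you have not carried it out.
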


As an immediate by--product we obtain the characterization of smooth projective varieties $X\subseteq\p N$ such that $\cT_X$ is Ulrich. Such a characterization has been  proved for the first time with a deep, interesting and long proof when $\field=\bC$  in \cite[Main Theorem]{B--M--PM--T} (see also \cite[Theorem 4.9]{Lop--Ray}).

\begin{corollary}
\label{cMain}
Let $X\subseteq\p N$ be a smooth projective variety of dimension $n\ge1$ and $\cO_X(h):=\cO_X\otimes\cO_{\p N}(1)$. 

Then $\cT_X$ is an $h$--Ulrich bundle if and only if either $X\cong\p1$ and $\cO_X(h)\cong\cO_{\p1}(3)$ or $X\cong\p2$ and $\cO_X(h)\cong\cO_{\p2}(2)$.
\end{corollary}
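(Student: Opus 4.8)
Write $d:=\deg X$ and recall the standard cohomological description of Ulrich bundles: a locally free sheaf $\cE$ on $(X,\cO_X(h))$ is $h$--Ulrich if and only if $H^i\big(\cE(-jh)\big)=0$ for all $i\ge0$ and all $1\le j\le n$ (this is the case $\quantum=0$ of Definition \ref{dMalaspinion}, rephrased). This vanishing passes to a general hyperplane section: if $Y\in|\cO_X(h)|$ is smooth (Bertini) and $H^{\bullet}\big(\cE(-jh)\big)=H^{\bullet}\big(\cE(-(j+1)h)\big)=0$, the restriction sequence $0\to\cE(-(j+1)h)\to\cE(-jh)\to\cE|_Y(-jh)\to0$ forces $H^{\bullet}\big(\cE|_Y(-jh)\big)=0$. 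Hence, if $\cT_X$ is $h$--Ulrich, iterating $n-1$ times shows that $\cT_X|_C$ is $\cO_C(1)$--Ulrich on a general linear curve section $C=X\cap\Lambda$ ($\Lambda$ a general linear subspace of codimension $n-1$), a smooth curve of some genus $g$ with $\deg\big(\cO_X(h)|_C\big)=d$.

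On the curve $C$ the Ulrich condition gives $H^{\bullet}\big(\cT_X|_C(-h)\big)=0$, hence $\chi\big(\cT_X|_C(-h)\big)=0$. Since $[C]=h^{n-1}$ in the Chow ring we have $\deg\big(\cT_X|_C\big)=c_1(\cT_X)\cdot h^{n-1}=-K_X\cdot h^{n-1}$, while adjunction on $X$ yields $2g-2=\big(K_X+(n-1)h\big)\cdot h^{n-1}$, i.e.\ $-K_X\cdot h^{n-1}=(n-1)d+2-2g$. Riemann--Roch on $C$ then gives
\[
0=\chi\big(\cT_X|_C(-h)\big)=\big(-K_X\cdot h^{n-1}-nd\big)+n(1-g)=(n+2)(1-g)-d .
\]
Because $d\ge1$ and $g\ge0$, this forces $g=0$ and $d=n+2$. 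Moreover $C\cong\p1$, and a bundle $\cT_X|_C\cong\bigoplus_{i=1}^{n}\cO_{\p1}(a_i)$ is Ulrich for $\cO_C(1)=\cO_{\p1}(d)$ exactly when $\cO_{\p1}(a_i-d)$ has no cohomology for every $i$, i.e.\ $a_i=d-1=n+1$; thus $\cT_X|_C\cong\cO_{\p1}(n+1)^{\oplus n}$, and in particular $\cT_X$ restricts to the general curve section as a balanced, ample bundle.

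It remains to identify the smooth polarised pairs $(X,\cO_X(h))$ of dimension $n$ with $\deg X=n+2$ whose general curve section is rational, and to test whether $\cT_X$ is Ulrich. The condition $g=0$ is equivalent to $K_X+(n-1)h$ not being nef; by the classical structure theory of polarised varieties of sectional genus zero (note $\deg X=n+2$ also forces $\codim X\le n+1$), together with the constraint $\deg X=n+2$, the only possibilities are $(\p1,\cO_{\p1}(3))$, $(\p2,\cO_{\p2}(2))$, and the $n$--dimensional rational normal scrolls of degree $n+2$ (those with invariants $(2,2,1,\dots,1)$ and $(3,1,\dots,1)$), up to isomorphism‑preserving re‑embedding. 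For $(\p1,\cO(3))$ and $(\p2,\cO(2))$ the Euler sequence on $\p n$ together with Bott vanishing gives $H^{\bullet}\big(\cT_{\p n}(-jh)\big)=0$ for $1\le j\le n$, so $\cT_{\p n}$ is indeed Ulrich (and $\quantum=0$). For each scroll, the relative Euler sequence of the $\p{n-1}$--bundle structure and Leray's spectral sequence show that some $H^{i}\big(\cT_X(-jh)\big)$ with $1\le j\le n$ is nonzero (already $h^{2}\big(\cT_X(-2h)\big)=1$ when $n=2$), so $\cT_X$ is not Ulrich. The classification step and this case‑by‑case check constitute the main obstacle; everything preceding it is formal, and all computations involved are elementary and characteristic free (Euler sequences, Riemann--Roch, Leray). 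Finally, the statement is also immediate from Theorem \ref{tMain}(1) on specialising to quantum number $\quantum=0$.
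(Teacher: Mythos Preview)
Your proposal is correct. The final sentence already gives the paper's own proof of the corollary: it is literally stated there as an immediate by--product of Theorem \ref{tMain}(1), since an $h$--Ulrich bundle is an $h$--instanton with $\quantum=0$.

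The self--contained argument you give before that last sentence is essentially the paper's proof of Theorem \ref{tMain} itself, specialised to $\quantum=0$ and rephrased. Your Riemann--Roch computation on the curve section $C$ reproduces exactly the slope constraint \eqref{Slope}/\eqref{A}: indeed $(n+2)(1-g)=d$ is equivalent to $(n(n+1)h+(n+2)K_X)h^{n-1}=0$ once one substitutes $2g-2=(K_X+(n-1)h)h^{n-1}$. Your ``classical structure theory of polarised varieties of sectional genus zero'' is precisely Theorem \ref{tKK}, and your elimination of the scroll case via the relative Euler sequence and $h^2(\cT_X(-2h))\ne0$ is the same computation the paper does for general $n$ (there it shows $h^n(\cT_X(-nh))\ge n-1$ via the sequence $0\to\cT_{X\vert\p1}\to\cT_X\to\cO_X(2f)\to0$). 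The one genuine methodological difference is that your restriction--to--hyperplane trick needs the full Ulrich vanishing to propagate to $C$, so it is intrinsically a $\quantum=0$ argument; the paper instead quotes \eqref{Slope}, which holds for every $h$--instanton, and thereby gets Theorem \ref{tMain} for all $\quantum$ at once. So your route buys a pleasant, self--contained derivation of the numerical constraint without citing \cite{An--Ca1}, at the cost of not extending to positive quantum number.
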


The next step is to ask whether a twist of the aforementioned bundles is an instanton. 

E.g. if $\field=\bC$, then $\cN_X(-h)$ is an $h$--Ulrich bundle, i.e. an $h$--instanton with quantum number $\quantum=0$, when $X$ is a standard linear determinantal variety (see \cite[Theorem 3.6]{Kl--MR}). 

The complete classification of varieties such that $\cN_X(ah)$ is $h$--Ulrich for some $a\in\bZ$ can be found in \cite[Theorem 1]{Lop}). 
Some very partial results in this direction have been proved without restrictions on $p$ and $\quantum$: see \cite{An--Ca3}. 

When $\field=\bC$ and $\quantum=0$, the behaviour of the twists of the sheaves $\Omega_X$, $\cT_X$, $\cC_X$ is  carefully studied in \cite{Lop--Ray,A--C--L--R}.

We deal with the case $p\ge0$ and $\quantum\ge0$ in Section \ref{sOmega}, proving the following result for the twists of $\Omega_X$.

\begin{theorem}
\label{tOmega}
Let $X\subseteq\p N$ be a smooth projective variety of dimension $n\ge1$ and $\cO_X(h):=\cO_X\otimes\cO_{\p N}(1)$. 

Then  $\Omega_X(ah)$ is an $h$--instanton bundle if and only if $a=2$, $X\cong \p1$ and $\cO_X(h)\cong\cO_{\p1}(1)$.
\end{theorem}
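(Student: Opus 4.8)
The plan rests on three elementary, characteristic‑free facts: the sheaf $\Omega^{p}_{\p N}(p+1)$ is globally generated for $0\le p\le N$ (it is $(p+1)$‑regular); Serre duality together with the canonical isomorphism $\Omega^{n-1}_X\cong\cT_X\otimes\omega_X$, which gives $h^n(\Omega_X(th))=h^0(\Omega^{n-1}_X(-th))$; and the conormal sequence $0\to\cO_D(-h)\to\Omega_X|_D\to\Omega_D\to0$ of a general smooth hyperplane section $D\in|h|$, for which $N_{D/X}\cong\cO_D(h)$. First I would pin down $a$. Restricting the surjections $\Omega^{1}_{\p N}|_X\twoheadrightarrow\Omega_X$ and $\Omega^{n-1}_{\p N}|_X\twoheadrightarrow\Omega^{n-1}_X$ and twisting shows that $\Omega_X(2h)$ and $\Omega^{n-1}_X(nh)$ are globally generated and non‑zero, so $h^0(\Omega_X(mh))>0$ for $m\ge2$ and $h^0(\Omega^{n-1}_X(mh))>0$ for $m\ge n$; feeding these into the vanishings $h^0(\Omega_X((a-1)h))=0$ and $h^n(\Omega_X((a-n)h))=0$ of Definition \ref{dMalaspinion} forces $a\le2$ and $a\ge1$. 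Hence $a\in\{1,2\}$.

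If $n=1$ then $\Omega_X=\omega_X$ and, after Serre duality, the instanton conditions collapse to $h^0(\omega_X((a-1)h))=h^1(\omega_X((a-1)h))=0$; this is $\chi(\omega_X((a-1)h))=0$ with both terms zero, and Riemann--Roch on the curve forces $g(X)=0$, $a=2$ and $\deg\cO_X(h)=1$, that is $X\cong\p1$ and $\cO_X(h)\cong\cO_{\p1}(1)$, where indeed $\Omega_X(2h)\cong\cO_{\p1}$ is $h$‑Ulrich. This is the list asserted in dimension one, so from now on I assume $n\ge2$ and aim at a contradiction.

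For $a=2$ I would use the exact sequence $0\to\Omega_X\to\Omega_X(h)\to\Omega_X(h)|_D\to0$ together with the twisted conormal sequence $0\to\cO_D\to\Omega_X(h)|_D\to\Omega_D(h)\to0$: the condition $h^0(\Omega_X(h))=0$ and the vanishing $h^1(\Omega_X)=0$ — which is one of the conditions of Definition \ref{dMalaspinion} when $n\ge3$, while for $n=2$ it follows because $H^1(\Omega_X(h)|_D)$ injects into $H^2(\Omega_X)=0$, forcing $H^1(\Omega_X)\to H^1(\Omega_X(h))$ to be a surjection between $\quantum$‑dimensional spaces, hence an isomorphism — yield $H^0(\Omega_X(h)|_D)=0$, contradicting $\field=H^0(\cO_D)\hookrightarrow H^0(\Omega_X(h)|_D)$. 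For $a=1$ and $n=2$ there is nothing new: on a surface $\Omega_X^{\vee}\otimes\omega_X\cong\Omega_X$, so Serre duality interchanges the conditions for $\Omega_X(h)$ and $\Omega_X(2h)$ and the previous case applies. For $a=1$ and $n\ge3$ I would pass to $\Omega^{n-1}_X$: Serre duality turns the instanton conditions for $\Omega_X(h)$ into $h^0(\Omega^{n-1}_X((n-1)h))=h^0(\Omega^{n-1}_X((n-2)h))=h^1(\Omega^{n-1}_X((n-2)h))=0$ (the middle one being free from the first, since $\cO_X(h)$ has a non‑zero section), and $\Lambda^{n-1}$ of the conormal sequence of $D$ gives $0\to\Omega^{n-2}_D((n-2)h)\to\Omega^{n-1}_X((n-1)h)|_D\to\omega_D((n-1)h)\to0$; the sequence $0\to\Omega^{n-1}_X((n-2)h)\to\Omega^{n-1}_X((n-1)h)\to\Omega^{n-1}_X((n-1)h)|_D\to0$ then forces $H^0(\Omega^{n-2}_D((n-2)h))=0$. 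One finishes by descent: by the restriction lemma — a general hyperplane section of an $h$‑instanton bundle on a variety of dimension $\ge3$ is an $h$‑instanton bundle with the same quantum number, obtained by chasing $0\to\cE(-(j+1)h)\to\cE(-jh)\to\cE|_Y(-jh)\to0$ — the bundle $\Omega_X(h)$ restricts to an $h$‑instanton bundle on a general smooth surface section $S$; this bundle is an extension of $\Omega_S(h)$ by a sheaf filtered by copies of $\cO_S$, and one excludes it as in the case $a=2$ using $\cT_S\otimes\omega_S\cong\Omega_S$, the borderline $X\cong\p n$ with $\cO_X(h)\cong\cO_{\p n}(1)$ being instead excluded directly because there $h^1(\Omega_X((a-1)h))\ne h^{n-1}(\Omega_X((a-n)h))$ violates the quantum‑number equality.

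The delicate step is this last one, $a=1$ with $n\ge3$: in contrast to $a=2$, the natural subsheaf $\Omega^{n-2}_D((n-2)h)$ of the restricted bundle is not a trivial line bundle, so no section is visible by inspection; making the descent rigorous — verifying the restriction lemma, including the preservation of the quantum number, in the borderline dimension $2$, and keeping track that after restriction one does not see $\Omega_S(h)$ itself but an extension of it — is where the real content of this case lies.
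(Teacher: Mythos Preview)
Your pinning down of $a\in\{1,2\}$ via global generation of $\Omega_X(2h)$ and $\Omega^{n-1}_X(nh)$ is valid and elegant, and for $n\ge3$, $a=2$ your hyperplane--section argument is a correct alternative to the paper's Euler--sequence trick. But the surface case contains a genuine error, and since your $a=1$ treatment ultimately descends to a surface section, the gap propagates.

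For $n=2$, $a=2$ you claim that ``$H^1(\Omega_X(h)|_D)$ injects into $H^2(\Omega_X)=0$''. The long exact sequence of $0\to\Omega_X\to\Omega_X(h)\to\Omega_X(h)|_D\to0$ only gives a \emph{map} $H^1(\Omega_X(h)|_D)\to H^2(\Omega_X)$, whose kernel is the image of the $\quantum$--dimensional space $H^1(\Omega_X(h))$; there is no reason for this image to vanish when $\quantum>0$. What you actually obtain is the four--term exact sequence
\[
0\longrightarrow H^0(\Omega_X(h)|_D)\longrightarrow H^1(\Omega_X)\longrightarrow H^1(\Omega_X(h))\longrightarrow H^1(\Omega_X(h)|_D)\longrightarrow 0,
\]
so only $h^0(\Omega_X(h)|_D)=h^1(\Omega_X(h)|_D)$, and nothing prevents both from being positive. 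Your argument therefore works for $\quantum=0$ (the Ulrich case) but not for general instantons, and by Serre self--duality the same gap kills your $n=2$, $a=1$ case. The paper sidesteps this entirely: the slope identity \eqref{Slope} specialises for $\cE=\Omega_X(ah)$ to
\[
(n^2+(1-2a)n)h^n+(n-2)K_Xh^{n-1}=0,
\]
which for $n=2$ reads $(6-4a)h^2=0$, i.e.\ $a=3/2$, not an integer. One line, no cohomology.

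The $a=1$, $n\ge3$ case is, as you yourself flag, only a sketch: the restriction lemma together with the filtration of $\Omega_X(h)|_S$ does not hand you an $h$--instanton structure on $\Omega_S(h)$ or on $\Omega_S(2h)$ itself, only on an extension, and you give no mechanism for pushing the vanishing $h^0=0$ through that extension (the intermediate pieces are $\cO_S$, which has sections). The paper again uses \eqref{Slope}: writing $n^2+(1-2a)n=n(n-2)+n(3-2a)$ and invoking the nefness of $\omega_X(nh)$ (Corollary~\ref{cKK}, a consequence of adjunction theory \`a la Kachi--Koll\'ar, valid in every characteristic) immediately forces $a\ge2$; no descent is needed.
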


When $n=1$ each $h$--instanton sheaf is $h$--Ulrich by definition, hence the following corollary is an easy consequence of the above theorem. When $\field=\bC$ it is \cite[Proposition 4.1 (i)]{Lop}.

\begin{corollary}
\label{cOmega}
Let $X\subseteq\p N$ be a smooth projective variety of dimension $n\ge1$ and $\cO_X(h):=\cO_X\otimes\cO_{\p N}(1)$. 

Then  $\Omega_X(ah)$ is an $h$--Ulrich bundle if and only if $a=2$, $X\cong \p1$ and $\cO_X(h)\cong\cO_{\p1}(1)$.
\end{corollary}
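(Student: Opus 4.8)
The plan is to deduce Corollary \ref{cOmega} directly from Theorem \ref{tOmega}, with no independent computation. First I would record the trivial observation that, by Definition \ref{dMalaspinion}, an $h$--Ulrich bundle is precisely an $h$--instanton bundle with quantum number $\quantum=0$; in particular every $h$--Ulrich bundle is an $h$--instanton bundle. Hence if $\Omega_X(ah)$ is $h$--Ulrich, then it is an $h$--instanton bundle, and Theorem \ref{tOmega} immediately forces $a=2$, $X\cong\p1$ and $\cO_X(h)\cong\cO_{\p1}(1)$. This disposes of the ``only if'' implication.

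For the converse, suppose $a=2$, $X\cong\p1$ and $\cO_X(h)\cong\cO_{\p1}(1)$. By Theorem \ref{tOmega}, $\Omega_X(2h)$ is then an $h$--instanton bundle, a priori with some quantum number $\quantum\ge0$. Now I would invoke the elementary remark made just before the statement: when $n=\dim(X)=1$ the conditions of Definition \ref{dMalaspinion} pin down $\quantum=0$. Indeed, for $n=1$ the first bullet reads $h^0\big(\cE(-h)\big)=h^1\big(\cE(-h)\big)=0$, while the third bullet reads $h^1\big(\cE(-h)\big)=h^0\big(\cE(-h)\big)=\quantum$, so necessarily $\quantum=0$ and $\cE$ is $h$--Ulrich. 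Applying this with $\cE=\Omega_X(2h)$ shows that $\Omega_X(2h)$ is $h$--Ulrich, which completes the proof. (One may also check this by hand: $\Omega_{\p1}(2)\cong\cO_{\p1}$, which is patently $\cO_{\p1}(1)$--Ulrich on $\p1$.)

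Since the whole argument is a two-line bookkeeping deduction from Theorem \ref{tOmega} together with the $n=1$ observation, there is essentially no obstacle. The only point deserving a moment's care is to make sure that the quantum number appearing implicitly in the statement of Theorem \ref{tOmega} is forced to be $0$ in the one-dimensional case — which is exactly the remark on $n=1$ recorded above.
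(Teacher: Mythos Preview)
Your proof is correct and follows exactly the paper's approach: the paper simply remarks that for $n=1$ every $h$--instanton sheaf is $h$--Ulrich by definition, so the corollary is an immediate consequence of Theorem \ref{tOmega}. Your unpacking of how Definition \ref{dMalaspinion} forces $\quantum=0$ when $n=1$ is precisely the content of that remark.
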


In Section \ref{sTangent} we list results and examples showing that the problem of determining whether $\cT_X(ah)$ is an $h$--instanton might be more difficult, even when $p=0$: see the recent paper \cite{Lop--Ray}. In particular, we are not able to prove a general result analogous to Theorem \ref{tOmega} above. 

\subsection{Acknowledgments}
The author would like to thank the reviewer for her/his corrections, remarks and suggestions.

\section{Notation and some helpful results}
\label{sGeneral}
Throughout we work over an algebraically closed field $\field$ of arbitrary characteristic $p\ge0$: restrictions on the base field are explicitly indicated when they are assumed. The projective space of dimension $N$ over $\field$ is denoted by $\p N$: $\cO_{\p N}(1)$ is the hyperplane line bundle. The structure sheaf of a scheme $X$ is denoted by $\cO_X$. 

Let $X$ be a smooth projective variety: we set $\omega_X:=\det(\Omega_X)$ and we denote by $K_X$ any divisor such that $\omega_X\cong\cO_X(K_X)$. We recall that  $\Omega_X$ and $\cT_X$ have rank $n:=\dim(X)$, while  $\cC_X$ and $\cN_X$ have rank $N-n$ if $X\subseteq\p N$.

For further notation and all the other necessary results not explicitly mentioned in the paper, we tacitly refer to \cite{Ha2} unless otherwise stated.

In order to prove Theorems \ref{tOmega} and \ref{tMain}, we will make use of some results holding in arbitrary characteristic and concerning the Fujita conjecture on adjoint linear systems (see \cite{Fuj}). To this purpose we recall some definitions and results.

Let $X$ be a smooth variety. A {\sl curve} in $X$ is a closed subscheme of pure dimension $1$. A line bundle $\cO_X(D)$ on $X$ is {\sl nef} if $D\Gamma\ge0$ for each irreducible curve $\Gamma\subseteq X$. Notice that the nefness of $\cO_X(D)$ only depends on its class in the N\'eron--Severi group $\NS(X)$.

A smooth projective variety $X$ of dimension $n\ge2$ is a {\sl scroll on a smooth curve $B$ (with respect to an ample line bundle $\cO_X(\xi)$)} if it is endowed with a surjective morphism $\pi\colon X\to B$ whose fibres are isomorphic to $\p {n-1}$ and such that the restriction of $\cO_X(\xi)$ to them is $\cO_{\p{n-1}}(1)$. 

In this case, there is a rank $n$ vector bundle $\cG$ such that $X$ is the ${\mathbf{Proj}}$ of the symmetric $\cO_B$--algebra of $\cG$. We have $\Pic(X)\cong\bZ\cO_X(\xi)\oplus\pi^*\Pic(B)$ where $\pi_*\cO_X(\xi)\cong\cG$. All the fibres of $\pi$ are algebraically equivalent and we denote by $f\in\NS(X)$ their class. By the Chern equation 
$\xi^n=\deg(\frak g)$ where $\cO_B(\frak g)=\det(\cG)$: by abuse of notation as in \cite{Ha2} we also write
\begin{equation}
\label{ScrollK}
\omega_X\cong\cO_X(-n\xi+(\frak g+K_B)f),
\end{equation}
where we set $\cO_{ X}(\frak a f):=\pi^*\cO_B(\frak a)$ for each divisor $\frak a$ on $B$. Since $\cO_X(\xi)$ is ample, it follows that
\begin{equation}
\label{ScrollDeg}
\deg(\frak g)=\xi^n\ge1.
\end{equation}

\begin{theorem}
\label{tKK}
Let $X$ be a smooth projective variety of dimension $n\ge1$ endowed with an ample line bundle $\cO_X(h)$.

Then either $\omega_X((n-1)h)$ is nef or one of the following assertions holds.
\begin{enumerate}
\item $X\cong\p 2$ and $\cO_X(h)\cong\cO_{\p 2}(2)$.
\item $X\cong\p n$ and $\cO_X(h)\cong\cO_{\p n}(1)$.
\item $X\subseteq\p{n+1}$ is a smooth quadric hypersurface and $\cO_X(h)=\cO_X\otimes\cO_{\p{n+1}}(1)$.
\item $n\ge 2$ and $X$ is a scroll on a smooth curve $B$ with respect to $\cO_X(h)$.
\end{enumerate}
\end{theorem}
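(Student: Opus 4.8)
The plan is to treat this as what it is, namely a classical theorem of the adjunction theory of polarized varieties (over $\bC$ it goes back to Ionescu and Fujita), and to run the standard Mori--theoretic proof while checking that every ingredient has a characteristic--free counterpart. First I would dispose of $n=1$, where $\omega_X((n-1)h)=\omega_X$ is nef exactly when $g(X)\ge1$ and the only other possibility is $X\cong\p1$. So from now on assume $n\ge2$ and that $L:=\omega_X((n-1)h)=K_X+(n-1)h$ is not nef, i.e. $L\cdot\Gamma<0$ for some irreducible curve $\Gamma$.

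Next I would observe that for any curve class $\gamma$ with $K_X\cdot\gamma\ge0$ one has $L\cdot\gamma\ge(n-1)(h\cdot\gamma)>0$, so the classes on which $L$ is negative all lie in the $K_X$--negative part of $\overline{NE}(X)$. By Mori's Cone Theorem --- valid in every characteristic, bend--and--break via Frobenius producing the required rational curves --- there is a $K_X$--negative extremal ray $R$ on which $L$ is negative. For a rational curve $C$ spanning $R$, Mori's estimate gives $-K_X\cdot C\le n+1$, while $L\cdot C<0$ and $h\cdot C\ge1$ give $-K_X\cdot C>(n-1)(h\cdot C)\ge n-1$; hence the length $\ell(R)$ of $R$ is $n$ or $n+1$, and $h\cdot C=1$ except possibly when $n=2$ and $h\cdot C=2$. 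Let $\varphi\colon X\to Y$ be the contraction of $R$. By Wi\'sniewski's inequality, a nontrivial fibre $F$ of $\varphi$ satisfies $\dim F\ge n+\ell(R)-1-\dim\mathrm{Exc}(\varphi)$; if $\varphi$ were birational then $\dim\mathrm{Exc}(\varphi)\le n-1$ would force $\dim F\ge\ell(R)\ge n$, which is impossible inside $\mathrm{Exc}(\varphi)$. So $\varphi$ is of fibre type, $\dim F\ge\ell(R)-1\ge n-1$, and therefore $\dim Y\in\{0,1\}$.

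Then I would split into the two cases. If $\dim Y=0$, then $\Pic(X)\cong\bZ$ with $-K_X$ ample; writing $h=bA$ and $-K_X=r_XA$ for the ample generator $A$, the numerical inequalities above force $n\le r_X\le n+1$, and the characteristic--free characterizations of projective space and of the smooth quadric (Kobayashi--Ochiai, Cho--Miyaoka--Shepherd-Barron) identify $X$ with $\p n$ when $r_X=n+1$ and with $Q^n\subseteq\p{n+1}$ when $r_X=n$, in both cases with $A=\cO_X(1)$; tracking the admissible values of $b$ then leaves exactly alternatives (1), (2) and (3). If $\dim Y=1$, Wi\'sniewski's bound applied to every fibre makes $\varphi$ equidimensional with all fibres of dimension $n-1$ over the smooth curve $Y=B$; a general fibre is a smooth $(n-1)$--fold on which every curve has anticanonical degree $\ge n=\dim F+1$, hence equals $\p {n-1}$, and the structure theorem for equidimensional fibre--type extremal contractions onto a smooth curve shows that $X$ is a $\p {n-1}$--bundle over $B$ on which $\cO_X(h)$ restricts to $\cO_{\p {n-1}}(1)$: this is a scroll on $B$, i.e. alternative (4).

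I expect the main obstacle to be, essentially, organizing the characteristic--free inputs: Mori's Cone Theorem and Wi\'sniewski's length estimate, the Kobayashi--Ochiai / Cho--Miyaoka--Shepherd-Barron characterizations of $\p n$ and $Q^n$, and the structure of fibre--type extremal contractions over a smooth curve. All of these are available in arbitrary characteristic, but the one step that does not transcribe the characteristic $0$ argument verbatim is the passage to a general fibre in the case $\dim Y=1$ (where one cannot simply invoke generic smoothness); this must either be argued with some care or sidestepped by quoting a characteristic--free statement directly. Everything else is the numerical bookkeeping indicated above.
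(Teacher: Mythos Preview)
The paper does not actually prove this theorem: its ``proof'' is a single sentence citing \cite[Theorem~1]{Ka--Ko} for $n\ge3$ and the remarks there on the validity of the Ionescu--Fujita arguments \cite{Io2,Fuj} for $n\le2$. Your proposal, by contrast, sketches the underlying Mori--theoretic argument itself; this is essentially the proof contained in those references, so in spirit you and the paper agree --- you are simply unpacking what the paper treats as a black box.

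One point deserves a cautionary flag. You write ``Let $\varphi\colon X\to Y$ be the contraction of $R$'' as if the Contraction Theorem were freely available; in positive characteristic and dimension $\ge4$ this is not known in general, since the base--point--free theorem is missing. What saves the argument here --- and what Kachi--Koll\'ar in fact do --- is that the ray $R$ has very high length ($\ge n$), so the contraction can be constructed directly from the unsplit family of minimal rational curves (the Cho--Miyaoka--Shepherd-Barron machinery and its characteristic--$p$ refinements), bypassing base--point--freeness. Your sketch is correct provided this is how you mean to produce $\varphi$; it would be worth saying so explicitly rather than invoking the contraction as a given. A more minor point: your treatment of $n=1$ (``the only other possibility is $X\cong\p1$'') does not pin down $\cO_X(h)$, and in fact the listed alternatives only accommodate $\cO_{\p1}(1)$ and $\cO_{\p1}(2)$; but the theorem is only used in the paper for $n\ge2$, so this is harmless in context.
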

\begin{proof}
See \cite[Theorem 1]{Ka--Ko} for the case $n\ge 3$ and when $n\le2$ the comments therein about the validity of the proofs in \cite{Io2, Fuj} in any characteristic.
\end{proof}

The following corollaries are immediate by--products of the above theorem.

\begin{corollary}
\label{cKK}
Let $X$ be a smooth projective variety of dimension $n\ge1$ endowed with an ample line bundle $\cO_X(h)$.

Then either $\omega_X(nh)$ is  nef or $X\cong\p n$ and $\cO_X(h)\cong\cO_{\p n}(1)$.
\end{corollary}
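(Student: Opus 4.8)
The plan is to deduce Corollary \ref{cKK} directly from Theorem \ref{tKK} by applying the theorem with $h$ replaced by a suitable multiple, and then checking the short list of exceptional cases by hand. Concretely, I would first observe that $\omega_X(nh)$ is nef as soon as $\omega_X(n\cdot h')$ fits the ``nef'' alternative of Theorem \ref{tKK} for some ample $\cO_X(h')$ with $(\dim X-1)h' = nh$ in $\NS(X)$; the cleanest choice is to apply Theorem \ref{tKK} verbatim but track that $\omega_X(nh) = \omega_X((n-1)h)\otimes\cO_X(h)$, so that nefness of $\omega_X((n-1)h)$ together with ampleness (hence nefness) of $\cO_X(h)$ gives nefness of the tensor product, since a sum of nef classes is nef. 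Thus the only way $\omega_X(nh)$ can fail to be nef is if $\omega_X((n-1)h)$ is not nef, i.e. if one of cases (1)--(4) of Theorem \ref{tKK} occurs.

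Next I would run through cases (1)--(4) and compute $\omega_X(nh)$ in each, discarding those where it turns out to be nef after all. In case (2), $X\cong\p n$ and $\cO_X(h)\cong\cO_{\p n}(1)$, so $\omega_X(nh)\cong\cO_{\p n}(-n-1+n)=\cO_{\p n}(-1)$, which is not nef; this is the one surviving exception. In case (1), $X\cong\p 2$ and $\cO_X(h)\cong\cO_{\p2}(2)$, so $\omega_X(2h)\cong\cO_{\p2}(-3+4)=\cO_{\p2}(1)$, which is nef. In case (3), $X\subseteq\p{n+1}$ is a smooth quadric, $\omega_X\cong\cO_X(-n)$ in the hyperplane class, so $\omega_X(nh)\cong\cO_X$, which is nef. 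In case (4), $X$ is a scroll over a smooth curve $B$ with $\cO_X(h)\cong\cO_X(\xi)$, and by \eqref{ScrollK} we get $\omega_X(nh)\cong\cO_X((\frak g+K_B)f)$, a pullback from $B$; a pullback of any line bundle from a curve is nef along the fibres $f$ (it restricts trivially there), and on horizontal curves its intersection number equals a degree on $B$ of $\frak g+K_B$, which is $\geq 0$ because $\deg\frak g = \xi^n\geq 1$ by \eqref{ScrollDeg} and $\deg K_B\geq -2$—wait, this last point needs a little care when $B\cong\p1$, so I will instead argue that $\omega_X(nh)\cdot\Gamma\geq 0$ for every irreducible curve $\Gamma$ by splitting into the cases $\Gamma$ a fibre (intersection $0$) and $\Gamma$ not contained in a fibre (intersection $=\deg(\pi|_\Gamma)\cdot\deg(\frak g+K_B)$), and then handle the low-genus subtlety of $B$ by noting that if $\deg(\frak g+K_B)<0$ then necessarily $B\cong\p1$ and $\deg\frak g\leq 1$, forcing $\deg\frak g=1$ and hence $X\cong\p{n}$ with $\cO_X(h)$ restricting to $\cO(1)$ on fibres—reducing to case (2) and being absorbed into the stated exception.

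The main obstacle, such as it is, is precisely the scroll case: one must be careful that the intersection-theoretic argument correctly identifies when $\omega_X(nh)$ fails to be nef, and that the only failure coming out of case (4) is the degenerate scroll $\p n\to\mathrm{pt}$ already covered by case (2). I would therefore present the scroll computation as the one nontrivial step, using \eqref{ScrollK}, \eqref{ScrollDeg}, and the description $\Pic(X)\cong\bZ\cO_X(\xi)\oplus\pi^*\Pic(B)$ from the paragraph preceding Theorem \ref{tKK}, together with the elementary fact that a class pulled back from a curve is nef if and only if it has nonnegative degree there. All remaining cases are one-line determinant computations in $\Pic$, so the corollary follows at once.
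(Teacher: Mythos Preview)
Your overall strategy is exactly the paper's: reduce to the exceptional list of Theorem \ref{tKK} (since nef $+$ ample is nef) and check each case by hand. The quadric and Veronese computations are fine and match the paper.

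The one genuine slip is in the scroll case when $B\cong\p1$. You assert that $\deg\frak g=1$ forces $X\cong\p n$, thereby collapsing into case (2). This is false: a rank $n$ bundle of degree $1$ on $\p1$ need not give $\p n$ (e.g.\ for $n=2$, $\cG=\cO_{\p1}\oplus\cO_{\p1}(1)$ yields the Hirzebruch surface $\bF_1$, not $\p2$), and in any event there is no surjective morphism $\p n\to\p1$ for $n\ge2$, so $\p n$ is never a scroll over a curve. The correct argument, which the paper gives, is to use that ampleness of $\cO_X(\xi)$ is equivalent to ampleness of $\cG$; on $\p1$ this means every summand $\cO_{\p1}(g_i)$ has $g_i\ge1$, hence $\deg\frak g=\sum g_i\ge n\ge2$ and so $\deg(\frak g+K_B)=\deg\frak g-2\ge0$. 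Thus the bad case $\deg\frak g=1$ simply does not occur, and $\omega_X(nh)$ is nef on every scroll. With this fix your proof is the paper's proof.
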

\begin{proof}
The set of varieties $X$ such that $\omega_X(nh)$ is not nef is contained in the one of varieties such that $\omega_X((n-1)h)$ is not nef. Thus it suffices to check that $\p n$ endowed with $\cO_X(h):=\cO_{\p n}(1)$ is the only variety $X$ listed in Theorem \ref{tKK} such that $\omega_X(nh)$ is not nef.  

When $Q\subseteq\p{n+1}$ is a smooth quadric hypersurface, then $\omega_X\cong\cO_X(-nh)$, hence $\omega_X(nh)\cong\cO_X$ is trivially nef.

When $X$ is a scroll on a curve, we use the notation introduced above. Thanks to \eqref{ScrollK} we obtain $\omega_X(nh)\cong\cO_X((\frak g+K_B)f)$ in $\NS(X)$. Thus, it suffices to show
$$
(nh+K_X)\Gamma=(\deg(\frak g)+2p_a(B)-2)f\Gamma\ge0
$$
for each irreducible curve $\Gamma\subseteq X$. 

If $\Gamma$ is contained in a fibre of $\pi$, then $f\Gamma=0$, because the general fibre does not intersect $\Gamma$. If $\Gamma$ is not contained in a fibre of $\pi$, then $\pi_{\vert \Gamma}$ is a finite map of degree $f\Gamma\ge1$. If $p_a(B)\ge1$, then
\begin{equation}
\label{PositiveScroll}
\deg(\frak g)+2p_a(B)-2\ge0
\end{equation}
by \eqref{ScrollDeg}. If $p_a(B)=0$, then $B\cong\p1$, hence $\cG\cong\bigoplus_{i=1}^n\cO_{\p1}(g_i)$. The ampleness of $\cG$ implies $g_i\ge1$, hence \eqref{PositiveScroll} holds in this case as well. We deduce that $\omega_X(nh)$ is nef regardless of $p_a(B)$.
\end{proof}

\begin{corollary}
\label{cSm}
Let $X$ be a smooth projective variety of dimension $n\ge1$ endowed with an ample line bundle $\cO_X(h)$.

Then $\omega_X((n+1)h)$ is  nef.
\end{corollary}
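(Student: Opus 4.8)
The plan is to deduce this immediately from Corollary \ref{cKK}. First I would record the elementary remark that if $\cO_X(D)$ is nef and $\cO_X(E)$ is ample, then $\cO_X(D+E)$ is nef: for every irreducible curve $\Gamma\subseteq X$ one has $(D+E)\Gamma=D\Gamma+E\Gamma\ge0$, since $D\Gamma\ge0$ by nefness and $E\Gamma>0$ by ampleness; as noted in Section \ref{sGeneral}, this only depends on the classes in $\NS(X)$. Applying this with $\cO_X(D):=\omega_X(nh)$ and $\cO_X(E):=\cO_X(h)$, one gets that whenever $\omega_X(nh)$ is nef, so is $\omega_X((n+1)h)\cong\omega_X(nh)\otimes\cO_X(h)$.

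It then remains to treat the varieties for which $\omega_X(nh)$ fails to be nef. By Corollary \ref{cKK} there is exactly one such variety, namely $X\cong\p n$ with $\cO_X(h)\cong\cO_{\p n}(1)$. For this case I would simply compute $\omega_X\cong\cO_{\p n}(-(n+1))$, so that $\omega_X((n+1)h)\cong\cO_{\p n}$, which is trivially nef. Since the two cases of Corollary \ref{cKK} are exhaustive, this settles the statement.

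I do not expect any genuine obstacle here: all the real work is contained in Theorem \ref{tKK} and its corollary, and Corollary \ref{cSm} is just the observation that twisting once more by the ample bundle $\cO_X(h)$ absorbs the single surviving exception $\p n$. The case $n=1$ requires no separate treatment, being covered by the same argument (Corollary \ref{cKK} for $n=1$ says that either $\omega_X(h)$ is nef, in which case we add $\cO_X(h)$, or $X\cong\p1$ with $\cO_X(h)\cong\cO_{\p1}(1)$ and $\omega_X(2h)\cong\cO_{\p1}$).
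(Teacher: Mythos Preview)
Your proposal is correct and is essentially the paper's own proof: the paper simply says ``We use the same argument of the proof of Corollary \ref{cKK}'', meaning exactly your reduction (if $\omega_X(nh)$ is nef then so is $\omega_X(nh)\otimes\cO_X(h)$, and the sole exception $\p n$ with $\cO_{\p n}(1)$ gives $\omega_X((n+1)h)\cong\cO_{\p n}$).
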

\begin{proof}
We use the same argument of the proof of Corollary \ref{cKK}.
\end{proof}


Assume now that $\cE$ is an $h$--instanton sheaf on a smooth projective variety $X$ of dimension $n$ endowed with an ample and globally generated line bundle $\cO_X(h)$: assume also that either $\cO_X(h)$ is very ample or $p=0$. Thus the following strict restriction holds
\begin{equation}
\label{Slope}
c_1(\cE)h^{n-1}=\frac{\rk(\cE)}2((n+1)h+K_X)h^{n-1},
\end{equation}
see \cite[Theorem 1.6]{An--Ca1}: here $h^n$ and $K_Xh^{n-1}$ denote the degrees of the line bundles $\cO_X(h)$ and $\omega_X$ when $n=1$.

For further notation and all the other results used in the paper we tacitly refer to \cite{Ha2}, unless otherwise stated.

\section{Proof of Theorem \ref{tMain}}
As pointed out in the introduction, the normal bundle is never an instanton bundle. 

We start by listing some easy examples of smooth varieties whose tangent bundle is or is not an instanton bundle: in \cite{B--M--PM--T} the same computations are used solely for dealing with the case $k=0$. 

\begin{example}
\label{eP1}
Let $n=1$. If $X\cong\p1$ and $\cO_X(h)\cong\cO_{\p1}(d)$ with $d\le 3$, then
$$
h^0\big(\cT_X(-h)\big)=h^0\big(\cO_{\p1}(2-d)\big)=3-d.
$$
In all the other cases
$$
h^1\big(\cT_X(-h)\big)=h^0\big(\omega_X^2(h)\big)\ge3p_a(X)-3+\deg(X)\ge1.
$$
Thus $\cT_X$ is an $h$--instanton sheaf, if and only if $X=\p1$ and $\cO_X(h)=\cO_{\p1}(3)$. 

It is immediate to check that $\cT_X$ is the unique rank one $h$--Ulrich  sheaf on $X$.
\end{example}

\begin{example}
\label{eVeronese}
If $X\cong\p2$ and $\cO_X(h)\cong\cO_{\p2}(2)$, then the Bott's formulas imply
\begin{gather*}
h^i\big(\cT_X(-h)\big)=h^{i}\big(\Omega_\p2(1)\big)=0,\qquad h^j\big(\cT_X(-2h)\big)=h^{j}\big(\Omega_\p2(-1)\big)=0,
\end{gather*}
for $i\le 1\le j$. Thus $\cT_X$ is an $h$--instanton sheaf. 

Notice that $\cT_X$ is the unique rank two $h$--Ulrich sheaf on $\p2$: see \cite[Theorem 5.2]{Co--Ge}. 
\end{example}


We are ready to prove Theorem \ref{tMain}.

\begin{proof}[Proof of Theorem \ref{tMain}.]
As pointed out in the introduction, if $X\subseteq\p N$ and $\cO_X(h):=\cO_X\otimes\cO_{\p N}(1)$, then $\cN_X$ is never an $h$--instanton bundle, hence it is not $h$--Ulrich. 

Let us now consider $\cC_X$: in this case \eqref{Slope} becomes
\begin{equation}
\label{Conormal}
(((N-n)(n+1)+2(N+1))h+(N-n+2)K_X)h^{n-1}=0.
\end{equation}
On the one hand, $N>n$ yields
$$
(N-n)(n+1)+2(N+1)=(N-n+2)(n+1)+\lambda
$$
where $\lambda =2(N-n)>0$. Thus \eqref{Conormal} becomes 
$$
(N-n+2)((n+1)h+K_X)h^{n-1}=-\lambda h^n<0,
$$
thanks to the ampleness of $\cO_X(h)$. On the other hand $((n+1)h+K_X)h^{n-1}\ge0$, because $\omega_X((n+1)h)$ is nef by Corollary \ref{cSm}.  The contradiction yields that $\cC_X$ is not an $h$--instanton bundle.

The assertion on $\Omega_X$ is a particular case of Theorem \ref{tOmega}. Anyhow, we can also prove the assertion with the same argument used in the previous case. Indeed, in this case it leads to 
$$
(n-2)((n+1)h+K_X)h^{n-1}=-2(n+1) h^n<0,
$$
again contradicting Corollary \ref{cSm} as in the previous case when $n\ge2$. If $n=1$ the condition $h^1(\Omega_X(-h))=0$, leads to $h^0(\cO_X(h))=0$ by duality, again a contradiction. We deduce that $\Omega_X$ is not an $h$--instanton sheaf.

We now focus our attention on $\cT_X$ in what follows. The case $n=1$ is completely described in Example \ref{eP1}, hence we will assume $n\ge2$ from now on. The equality \eqref{Slope} for $\cT_X$ becomes
\begin{equation}
\label{A}
(n(n+1)h+(n+2)K_X)h^{n-1}=0.
\end{equation}
If $\lambda:=n(n+1)-(n+2)(n-1)>0$, then we have the obvious equality
\begin{equation*}
(n(n+1)h+(n+2)K_X)h^{n-1}=(n+2)((n-1)h+K_X)h^{n-1}+\lambda h^n
\end{equation*}
If $\omega_X((n-1)h)$ is nef, then we can argue as for $\cC_X$, because $\cO_X(h)$ is ample. 

Let us examine the cases listed in Theorem \ref{tKK} when  $\omega_X((n-1)h)$ is not nef. 
In the case (1) of Theorem \ref{tKK} the sheaf $\cT_X$ is an $h$--instanton bundle thanks to Example \ref{eVeronese}. In the case (2) of Theorem \ref{tKK}, we deduce that \eqref{A} becomes $-2=0$, while in case (3) we get $-n=0$: thus $\cT_X$ is not an $h$--instanton sheaf.

Consider the case (4) of Theorem \ref{tKK}. Thus $n\ge2$ and $X\subseteq\p N$ is a scroll on a smooth curve $B$ with respect to $\cO_X(h)$. Thanks to \eqref{ScrollK} and \eqref{ScrollDeg}, then \eqref{A} becomes
$$
\deg(\frak g)=-(2+n)(p_a(B)-1).
$$
It follows that $p_a(B)=0$ necessarily, because the left--hand side is positive and the right--hand one is non--positive if $p_a(B)\ge1$. 

Thus $B\cong\p1$ and $\deg(\frak g)=n+2$, hence $K_{X}=-nh+nf$. If $\cT_{{X}\vert \p1}$ is the relative tangent sheaf of the morphism $\pi\colon {X}\to\p1$,  we have the exact sequence
$$
0\longrightarrow \cT_{{X}\vert \p1}\longrightarrow\cT_{X}\longrightarrow \cO_{X}(2f)\longrightarrow0
$$
because $\pi$ is smooth. Its cohomology tensored by $\cO_\bP(-nh)$ and the Serre duality return
$$
h^n(\cT_{X}(-nh))\ge h^n(\cO_{X}(-nh+2f))=h^0(\cO_{X}((n-2)f))=n-1\ge1.
$$
Thus $\cT_{X}$ is not an $h$--instanton sheaf.
\end{proof}

\begin{remark}
In order to prove Theorem \ref{tMain} we only used that the bundles $\cE$ we are interested in actually satisfy the following properties:
\begin{itemize}
\item $h^0\big(\cE(-h)\big)=0$ (used for $\cN_X$);
\item $h^n\big(\cE(-nh)\big)=0$ (used for curves and occasionally for $\cT_X$);
\item $\cE$ satisfies \eqref{Slope} (used for $\cC_X$, $\Omega_X$ and $\cT_X$)
\end{itemize}
\end{remark}

 \begin{remark}
If the characteristic of $\field$ is zero, then \eqref{Slope} holds only assuming that $\cO_X(h)$ is ample and globally generated, hence the same is true for the assertions about $\Omega_X$ and $\cT_X$ in Theorem \ref{tMain}.
\end{remark}

\begin{remark}
If $X\subseteq\p N$, then the same argument used in the proof of Theorem \ref{tMain} easily implies that $\cO_X\otimes\Omega_{\p N}$ and $\cO_X\otimes\cT_{\p N}$ are never instanton bundles with respect to $\cO_X(h):=\cO_X\otimes\cO_{\p N}(1)$.
\end{remark}

\section{Proof of Theorem \ref{tOmega}.}
\label{sOmega}
We already checked in Theorem \ref{tMain} that $\Omega_X$ is never an $h$--instanton bundle with a very short proof. In this section we  deal with the twists of the cotangent bundle, giving  the proof of Theorem \ref{tOmega} stated in the introduction.

We start with the following example analyzing the case $n=1$: in \cite{Lop} essentially the same computations are used solely for dealing with the case $k=0$. 

\begin{example}
\label{eOmega}
Assume $n=1$. On the one hand, if $\Omega_X(ah)$ is an $h$--instanton, then 
$$
h^0(\Omega_X((a-1)h))=h^1(\Omega_X((a-1)h))=0:
$$
in particular 
$$
h^0(\cO_X((1-a)h))=h^1(\Omega_X((a-1)h))=0,
$$
hence $a\ge2$, because $\cO_X(h)$ is globally generated. On the other hand, the Riemann--Roch theorem on $X$ implies
$$
h^0(\Omega_X((a-1)h))-h^1(\Omega_X((a-1)h))=p_a(X)-1+(a-1)\deg(X).
$$
Since $a\ge2$, it follows that $p_a(X)=0$ necessarily and, consequently, $a=2$ and $\deg(X)=1$, i.e. $X\cong\p1$ and $\cO_X(h)\cong\cO_{\p1}(1)$. 

Conversely, it is immediate to check that $\Omega_{\p1}(2)\cong\cO_{\p1}$ is the unique rank one instanton (and Ulrich) sheaf on $\p1$ with respect to $\cO_{\p1}(1)$.
\end{example}

We now prove Theorem \ref{tOmega} stated in the introduction.

\begin{proof}[Proof of Theorem \ref{tOmega}.]
If $\Omega_X(ah)$ is an $h$--instanton bundle, then \eqref{Slope} implies
\begin{equation}
\label{Omega}
(n^2+(1-2a)n)h^n+(n-2)K_Xh^{n-1}=0.
\end{equation}

If $n=1$, then  $a=2$, $X\cong\p 1$ and $\cO_X(h)\cong\cO_{\p1}(1)$ by Example \ref{eOmega}. If $n=2$, then \eqref{Omega} has no integral solutions, hence $\Omega_X(ah)$ is not an $h$--instanton. Thus, the proof is complete also when $n=2$.

Assume $n\ge3$. We have
$$
n^2+(1-2a)n= n(n-2)+n(3-2a).
$$
Let $a\le 1$. On the one hand, $\lambda=n(3-2a)>0$, hence \eqref{Omega} becomes
\begin{equation}
\label{Referee}
(n-2)(nh^n+K_Xh^{n-1})=-\lambda h^n<0,
\end{equation}
due to the ampleness of $\cO_X(h)$. On the other hand the left--hand side of the equality above is non--negative by Corollary \ref{cKK} unless $X\cong\p n$ and $\cO_X(h)\cong\cO_{\p n}(1)$.

If this is the case, then \eqref{Omega} returns $n(1-a)=-1$ which is impossible as $n\ge3$. Thus the left--hand side of \eqref{Referee} is non negative, while the right--hand side is negative, a contradiction. It follows that $a\ge2$. 

Notice that the exterior product of the dual of \eqref{seqEuler} tensored by $\cO_{\p N}(a)$ is
$$
0\longrightarrow(\wedge^2\Omega_{\p N})(a)\longrightarrow\cO_{\p N}(a-2)^{\oplus{{N+1}\choose2}}\longrightarrow\Omega_{\p N}(a)\longrightarrow0.
$$
Its restriction to $X$ combined with the surjective morphism $\cO_X\otimes\Omega_{\p N}(a)\twoheadrightarrow\Omega_X(ah)$ induced by the dual of \eqref{seqNormal} implies that $\Omega_X(ah)$ is globally generated for $a\ge2$. Since we must have $h^0(\Omega_X((a-1)h))=0$ by definition, it follows that $a\le2$ necessarily. Thus, if $n\ge3$ and $\Omega_X(ah)$ is an $h$--instanton, then $a=2$ necessarily.

By definition $\Omega_X(2h)$ is not an $h$-instanton bundle when $n\ge3$ if 
\begin{equation}
\label{Hodge}
h^1(\Omega_X)\ge1.
\end{equation}

If $\Omega_X(2h)$ is an $h$--instanton bundle, then $h^0(\Omega_X(h))=0$ by definition. Thus, tensoring \eqref{seqEuler} by $\Omega_X$ we obtain an injective map
$$
\varrho\colon \Hom_X(\cT_X,\cO_X\otimes\cT_{\p N})\cong H^0(\Omega_X\otimes\cT_{\p N})\longrightarrow H^1(\Omega_X).
$$
Thus \eqref{seqNormal} implies $\varrho\ne0$, which yields \eqref{Hodge}. Thus, $\Omega_X(2h)$ is not an $h$-instanton bundle when $n\ge3$.
\end{proof}

\begin{remark}
\label{rChar}
If $\field=\bC$, then \eqref{Hodge} certainly holds because the Lefschetz  $(1,1)$--theorem implies the existence of an injective morphism $\NS(X)\to H^1(\Omega_X)$, hence there would be no need of further computations in this case. 

When $p\ne0$ the above morphism still exists, but it could be not injective, hence we cannot argue \eqref{Hodge} in the same way. 
\end{remark}

\section{On the tangent bundle of some canonical varieties}
\label{sTangent}
In this section we collect some partial results and examples showing that the problem of determining whether $\cT_X(ah)$ is an $h$--instanton bundle might be highly non--trivial (hence, perhaps, quite intriguing).

The following result is an immediate consequence of Theorem \ref{tMain}.

\begin{proposition}
\label{pSubcanonical}
Let $X\subseteq\p N$ be a smooth projective variety of dimension $n\ge1$ and $\cO_X(h):=\cO_X\otimes\cO_{\p N}(1)$. Assume that $\omega_X\cong\cO_X(\alpha h)$ for some $\alpha\in\bZ$.

Then  $\cT_X(ah)$ is an $h$--instanton bundle if and only if $a=\alpha +n-1$, $X\cong \p1$ and $\cO_X(h)\cong\cO_{\p1}(1)$;
\end{proposition}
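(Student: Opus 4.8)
The natural plan is to reduce the statement to Theorem~\ref{tOmega} by means of a Serre--duality symmetry of the instanton conditions. Since $X$ is smooth, $\cT_X$ and each of its twists $\cT_X(ah)$ are locally free, and by hypothesis $\omega_X\cong\cO_X(\alpha h)$ is a line bundle. The first step I would carry out is the following elementary observation: for a locally free sheaf $\cE$ of positive rank on a smooth projective variety $X$ of dimension $n$ with $\omega_X\cong\cO_X(\alpha h)$, the sheaf $\cE$ is an $h$--instanton bundle with quantum number $\quantum$ if and only if $\cE^\vee((\alpha+n+1)h)$ is. Indeed, Serre duality gives $h^i(\cE(-jh))=h^{n-i}(\cE^\vee((\alpha+j)h))$ for all $i$ and $j$, and one checks directly that the substitution $(i,j)\mapsto(n-i,\,n+1-j)$ sends the list of conditions in Definition~\ref{dMalaspinion} for $\cE$ bijectively onto the same list for $\cE':=\cE^\vee((\alpha+n+1)h)$: the two ``outer'' vanishings $h^0(\cE(-h))=0$ and $h^n(\cE(-nh))=0$ are interchanged, the two families of ``middle'' vanishings are interchanged, and the two equalities carrying the quantum number are interchanged. (This is simply the familiar Serre--duality symmetry $\cE\mapsto\cE^\vee\otimes\omega_X((n+1)h)$ of Ulrich bundles, extended to instanton bundles and specialised to the subcanonical case.) The one place where some attention is needed is precisely this index bookkeeping, but it is entirely mechanical.

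I would then apply the observation to $\cE=\cT_X(ah)$. Since $\cE^\vee((\alpha+n+1)h)=\Omega_X((\alpha+n+1-a)h)$, it follows that $\cT_X(ah)$ is an $h$--instanton bundle if and only if $\Omega_X((\alpha+n+1-a)h)$ is one. By Theorem~\ref{tOmega} the latter holds precisely when $\alpha+n+1-a=2$, $X\cong\p1$ and $\cO_X(h)\cong\cO_{\p1}(1)$, that is, when $a=\alpha+n-1$, $X\cong\p1$ and $\cO_X(h)\cong\cO_{\p1}(1)$; in particular $\cT_X(ah)$ is never an $h$--instanton bundle as soon as $n\ge2$. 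This proves the ``only if'' direction.

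For the converse, it suffices to note that $X\cong\p1$ with $\cO_X(h)\cong\cO_{\p1}(1)$ does satisfy the hypothesis of the proposition: here $\omega_X\cong\cO_{\p1}(-2)=\cO_X(-2h)$, so $\alpha=-2$, whence $a=\alpha+n-1=-2$ and $\cT_X(ah)\cong\cT_{\p1}(-2)\cong\cO_{\p1}$, which is an $h$--instanton (indeed $h$--Ulrich) bundle, as already recorded in Example~\ref{eP1}. In this way the whole argument reduces to Theorem~\ref{tOmega}, the only genuinely new ingredient being the Serre--duality reduction of the first paragraph, which is also the step I would expect to demand the most care.
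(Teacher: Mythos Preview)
Your proof is correct and follows essentially the same approach as the paper's: both reduce to Theorem~\ref{tOmega} via the Serre--duality symmetry $\cE\mapsto\cE^\vee\otimes\omega_X((n+1)h)$, which the paper simply cites from \cite[Section 6]{An--Ca1} while you spell it out explicitly. The only quibble is that the fact that $\cO_{\p1}$ is $h$--Ulrich on $(\p1,\cO_{\p1}(1))$ is recorded in Example~\ref{eOmega} rather than Example~\ref{eP1}.
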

\begin{proof}
Since $\omega_X\cong\cO_X(\alpha  h)$, it follows from the Serre duality that $\cT_X(ah)$ is an $h$--instanton bundle if and only if the same is true for $\Omega_X((\alpha -a+n+1)h)$ (see \cite[Section 6]{An--Ca1}). Thus the statement follows easily from Theorem \ref{tOmega}.
\end{proof}

In view of the above proposition, it is perhaps natural to deal with the pluricanonical and antipluricanonical varieties $X$, i.e. such that $\omega_X^{\beta}\cong\cO_X(h)$ for some $\beta\in\bZ$. Trivially $\beta\ne0$ and the case $\beta=\pm1$ is covered by Proposition \ref{pSubcanonical}, hence we assume $\beta\not\in\{\ 0,\pm1\ \}$ in the following statement.

\begin{proposition}
\label{pPluriCanonical}
Let $X$ be a smooth projective variety of dimension $n\ge1$ endowed with an ample and globally generated line bundle $\cO_X(h)$. Assume that $\omega_X^{\beta}\cong\cO_X(h)$ for some $\beta\in\bZ\setminus\{\ 0,\pm1\ \}$.

If $\cT_X(ah)$ is an $h$--instanton bundle, then $n=2$, $1\le a\le 2$ and $1\le K_X^2\le 5\chi(\cO_X)$. In this case the quantum number of $\cT_X(ah)$ is $10\chi(\cO_X)-2K_X^2$.
\end{proposition}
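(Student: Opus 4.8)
The plan is to read off all the numerical information from the identity \eqref{Slope} applied to $\cE=\cT_X(ah)$, combined with $\omega_X^\beta\cong\cO_X(h)$, and then to compute the quantum number by Riemann--Roch.

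\emph{Step 1: the numerical constraints.} I would substitute $\cE=\cT_X(ah)$ into \eqref{Slope}. Since $\rk(\cT_X(ah))=n$ and $c_1(\cT_X(ah))=-K_X+nah$, a short manipulation turns \eqref{Slope} into
\[
n(2a-n-1)h^n=(n+2)K_Xh^{n-1}.
\]
Because $\omega_X^\beta\cong\cO_X(h)$, the classes $h$ and $\beta K_X$ are numerically equivalent, so $K_Xh^{n-1}=h^n/\beta$ and, as $h^n>0$,
\[
\beta\, n\,(2a-n-1)=n+2 .
\]
Hence $n\mid n+2$, so $n\in\{1,2\}$; for $n=1$ this reads $2\beta(a-1)=3$, impossible by parity, so $n=2$ and $\beta(2a-3)=2$. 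Since $2a-3$ is odd it must be $\pm1$, giving $(a,\beta)=(2,2)$ or $(a,\beta)=(1,-2)$; in both cases $1\le a\le2$. Finally $1\le h^2=\beta^2K_X^2=4K_X^2$, so $K_X^2$ is a positive integer, i.e. $K_X^2\ge1$.

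\emph{Step 2: the quantum number.} By Definition \ref{dMalaspinion}, $\quantum=h^1\big(\cT_X((a-1)h)\big)$ and $h^0\big(\cT_X((a-1)h)\big)=0$; since $a-1\ge0$ and $\cO_X(h)$ is globally generated, hence effective, tensoring a nonzero section of $\cO_X(h)$ into $\cT_X$ also gives $h^0(\cT_X)=0$. To kill the top cohomology I would use Serre duality together with the rank-two identity $\Omega_X\cong\cT_X\otimes\omega_X$, which rewrites $h^2\big(\cT_X((a-1)h)\big)$ as $h^0\big(\cT_X\otimes\omega_X^{2}\otimes\cO_X((1-a)h)\big)$; using $\omega_X^{2}\cong\cO_X(h)$ when $\beta=2$ and $\omega_X^{2}\cong\cO_X(-h)$ when $\beta=-2$, in both admissible cases this sheaf is $\cT_X$ twisted by a non-positive multiple of $h$, so its $h^0$ is at most $h^0(\cT_X)=0$. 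Therefore $\quantum=-\chi\big(\cT_X((a-1)h)\big)$.

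\emph{Step 3: Riemann--Roch, and the bounds.} I would compute $\chi\big(\cT_X((a-1)h)\big)$ by Hirzebruch--Riemann--Roch on the surface $X$, using Noether's formula $c_2(\cT_X)=12\chi(\cO_X)-K_X^2$ and the relations $K_Xh=\beta K_X^2$, $h^2=\beta^2K_X^2$. For both pairs $(a,\beta)$ the $h$-dependent terms cancel and one obtains $\chi\big(\cT_X((a-1)h)\big)=2K_X^2-10\chi(\cO_X)$, hence $\quantum=10\chi(\cO_X)-2K_X^2$. As $\quantum$ is the dimension of a cohomology group it is non-negative, so $K_X^2\le5\chi(\cO_X)$; together with Step 1 this yields $1\le K_X^2\le5\chi(\cO_X)$. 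The step I expect to be delicate is the vanishing $h^2\big(\cT_X((a-1)h)\big)=0$: it is the only point where the hypotheses $n=2$ and $\cO_X(h)\cong\omega_X^{\pm2}$ are used in an essential (non-numerical) way, via $\Omega_X\cong\cT_X\otimes\omega_X$ and the monotonicity of $h^0(\cT_X(mh))$ in $m$. One should also keep an eye on the precise hypotheses guaranteeing \eqref{Slope} (here $\cO_X(h)$ is only assumed ample and globally generated), as recalled in Section \ref{sGeneral}.
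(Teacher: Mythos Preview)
Your argument is correct and follows essentially the same route as the paper: derive the Diophantine equation $\beta n(2a-n-1)=n+2$ from \eqref{Slope}, reduce to $n=2$ and $a\in\{1,2\}$, kill $h^0$ and $h^2$ of $\cT_X((a-1)h)$, and then compute $\quantum=-\chi$ by Riemann--Roch. The only cosmetic difference is in the $h^2$-vanishing: the paper uses the instanton condition $h^2\big(\cT_X((a-2)h)\big)=0$ together with Serre duality (monotonicity of $h^0$ under twisting by a section of $\cO_X(h)$), whereas you pass through the rank-two identity $\Omega_X\cong\cT_X\otimes\omega_X$ and the explicit relation $\omega_X^{2}\cong\cO_X(\pm h)$ to reduce to $h^0(\cT_X)=0$; both are valid and equally short.
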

\begin{proof}
Assume that $\cT_X(ah)$ is an $h$--instanton bundle. The hypothesis on $\omega_X$ and  \eqref{Slope} yield
$n(2a-n-1)\beta=n+2$, because $0\ne h^n=\beta^n K_X^n$.
In particular $2a\ne n+1$ because $n\ge1$. The above equality has no integral solution if $n=1$, hence we will assume $n\ge2$ from now on: thus $1<(n+2)/n\le 2$. Since 
$$
\beta=\frac{n+2}{n(2a-n-1)}\in\bZ,
$$
it follows that necessarily $(n+2)/n=2$ and $2a-n-1\in\{\ \pm1,\pm2\ \}$. We deduce that $n=2$, hence $\beta=\pm2$ (recall that $\beta\ne\pm1$): consequently $2a=3\pm1$, whence $1\le a\le 2$.

If $\cT_X(ah)$ is an $h$--instanton, then $h^0(\cT_{ X}((a-1)h))=h^2(\cT_{ X}((a-2)h))=0$  by definition: the Serre duality then implies $h^2(\cT_{ X}((a-1)h))=0$ as well. We have
\begin{gather*}
c_1(\cT_X((a-1)h))=2(a-1)h-K_X,\\
c_2(\cT_X((a-1)h))=12\chi(\cO_X)-K_X^2-(a-1)hK_X+(a-1)^2h^2,
\end{gather*}
hence the Riemann--Roch theorem returns
$$
\quantum=h^1(\cT_X((a-1)h))=-\chi(\cT_X((a-1)h))=10\chi(\cO_X)-K_X^2-((a-1)h-K_X)^2.
$$
Thus, for $1\le a\le 2$, we obtain $\quantum=10\chi(\cO_X)-2K_X^2\ge0$ whence $K_X^2\le 5\chi(\cO_X)$. On the other hand $\beta^2 K_X^2=h^2\ge1$, whence $K_X^2\ge1$.
\end{proof}

In the following examples, we inspect the surfaces in Proposition \ref{pPluriCanonical} in more detail, also showing that $q(X)=0$ necessarily. For simplicity we assume $p=0$.

Notice that the existence on such surfaces of rank two $h$--instanton bundles $\cE$ with $c_1(\cE)=3h+K_X=(3\beta+1)K_X$ and arbitrary quantum number follows from \cite[Example 6.11]{An--Ca1}, because $h^1(\omega_X^{\beta})=0$ thanks to the Kodaira vanishing theorem, because $\beta\in\bZ\setminus\{\ 0,\pm1\ \}$.

\begin{example}
\label{eDelPezzo}
Let $a=1$, hence $\beta=-2$. 

In this case $\cO_X(h)\cong\omega_X^{-2}$: in particular $\omega_X^{-1}$ is ample, hence $X$ is a Del Pezzo surface. Every Del Pezzo surface is either the blow up of $\p2$ at $0\le r\le 8$ general points or it is isomorphic to $\p1\times\p1$. In the former case $K_X^2=9-r$ is called {\sl degree} of $X$ and $\omega_X^{-1}$ is globally generated if $r\le 7$ and very ample if $r\le6$. In the latter case $\omega_X^{-1}$ is very ample and $K_X^2=8$.

The bundle $\cT_X(h)$ is an $h$--instanton if and only if $h^0(\cT_X)=0$, because it is orientable, i.e. $c_1(\cT_X(h))=3h+K_X=-5K_X$ (see \cite[Corollary 6.9]{An--Ca1}): moreover, in this case, its quantum number is  $h^1(\cT_X)=h^1(\cT_X(-h))$. We recall that $h^0(\cT_X)$ is the dimension of the tangent space to $\Aut(X)$ at the identity because $p=0$ (see \cite[Exercise I.2.16.4]{Kol}): it follows that $\cT_X(h)$ is an $h$--instanton if and only if $\Aut(X)$ is finite. If $2\le K_X^2\le5$, then $\Aut(X)$ is finite (see \cite[Corollary 8.2.33]{Dol}. If $K_X^2\ge6$, then either $X$ is $\p2$ blown up at $0\le r\le 3$ general points, hence $\Aut(X)$ has positive dimension because it contains as subgroup the group of projectivities of $\p2$ fixing the blown up points, or $X\cong\p1\times\p1$, hence it contains $\PGL_2\times\PGL_2$ as subgroup.

We conclude that $\cT_X(h)$ is an $h$--instanton if and only if $2\le K_X^2\le5$ and it is $h$--Ulrich if and only if $K_X^2=5$.

This result is a very particular case of a  more general result proved in \cite{Lop--Ray}.
\end{example}


\begin{example}
\label{eGeneralType}
Let $a=2$, hence $\beta=2$. 

In this case $\cO_X(h)\cong\omega_X^{2}$: in particular $X$ is a surface of general type and it is minimal because $\omega_X$ is ample.
Moreover, $\cT_X(2h)$ is an $h$--instanton  if and only if $h^0(\cT_{ X}(h))=h^0(\cT_{ X}(2K_X))=0$. If $p_g(X)\ge1$, we then obtain
$$
0=h^0(\cT_X(2K_X))=h^0(\Omega_X^1(K_X))\ge h^0(\Omega_X^1)=h^1(\cO_X)=q(X),
$$
because $\cT_X(K_X)\cong\Omega_X^1$. If $p_g(X)=0$, then $q(X)=0$, because $\chi(\cO_X)\ge1$ (see \cite[Theorem VII.1.1 (ii)]{B--H--P--VV}).

Thus $X$ is embedded by $\cO_X(h)$ as a surface of degree $d:=h^2=4K_X^2$ inside $\p N$ where $N=h^0(\omega_X^{2})-1=K_X^2+p_g(X)$ (see \cite[Corollary VII.5.4]{B--H--P--VV}). 

If $N=K_X^2+p_g(X)=3$, then $\omega_X\cong\cO_X((d-4) h)$ thanks to the adjunction formula in $\p3$. If $N=K_X^2+p_g(X)=4$, then the double point formula (see \cite[Example A.4.1.3]{Ha2}) implies
$$
d^2-13d+12\chi(\cO_X)=0.
$$
because $d=4K_X^2$. Since $\chi(\cO_X)=1+p_g(X)\ge1$, it follows that the only possible cases for $K_X^2$ being a positive integer such that $d=4K_X^2$ is a solution of the equation above are either $p_g(X)=0$ and $K_X^2=3$ or $p_g(X)=2$ and $K_X^2=1$. 
In both cases $K_X^2+p_g(X)=3$, contradicting the hypothesis $N=4$, hence $N=K_X^2+p_g(X)\ge5$. In particular $K_X^2\ge6-\chi(\cO_X)$.

Moreover, the classification of surfaces of degree up to $8$ in $\p N$ (see \cite{Io1,Io3}: see also \cite{Ok3}) implies that there is no surface $X$ with $\kappa(X)=2$ and $K_X^2\le 2$. Thus $K_X^2\ge3$ necessarily. 

We do not know if a surface $X$ such that $h^0(\cT_X(h))=0$ actually exists, but if it does, $X$ is a minimal surface of general type such that
\begin{gather*}
\cO_X(h)\cong\omega_X^{2},\qquad q(X)=0,\qquad \max\{\ 3,6-\chi(\cO_X)\ \}\le K_X^2\le 5\chi(\cO_X),
\end{gather*}
and $\cT_X(2h)$ is $h$--Ulrich if and only if $K_X^2=5\chi(\cO_X)$. In particular, either $\cT_X(2h)$ is $h$--Ulrich or  $h^0(\cT_{ X}(h))=h^0(\cT_{ X}(2K_X))\ne0$ when $p_g(X)=0$.
\end{example}

\begin{remark}
\label{rSharp}
We could also look for varieties $X$ with $\omega_X^{\beta}\cong\cO_X(\alpha h)$ and such that $\cT_X(ah)$ is an $h$--instanton bundle, for suitable $\alpha,\beta, a\in\bZ$. 

E.g. the Veronese surface satisfies the above hypothesis with $\alpha=3$, $\beta=-2$, $a=0$ (see Example \ref{eVeronese} above or \cite{B--M--PM--T}): notice that in this case $\cO_X(h)\cong\cO_{\p2}(2)$ and $\cT_X$ is actually $h$--Ulrich.

We refer the interested reader to \cite{Lop--Ray} for further results, examples and details.
\end{remark}

\bigskip
\noindent
Gianfranco Casnati,\\
Dipartimento di Scienze Matematiche, Politecnico di Torino,\\
c.so Duca degli Abruzzi 24,\\
10129 Torino, Italy\\
e-mail: {\tt gianfranco.casnati@polito.it}

\end{document}